\documentclass[11pt]{article}
\usepackage{a4wide}
\usepackage{xcolor}

\usepackage{parskip}
\usepackage{hyperref}
\usepackage{parskip}
\usepackage{setspace}
\usepackage{graphicx}

\usepackage{tikz} 
\usetikzlibrary{decorations, arrows.meta, positioning} 
\usepackage{graphicx}
\usetikzlibrary{decorations.markings}
\usepackage{tikz}

\usetikzlibrary{calc,arrows.meta,decorations.markings,shapes.geometric}
\definecolor{purple}{RGB}{138,43,226}

\usepackage[margin=1in]{geometry}
\usepackage{amsmath,amsthm,amssymb}
\usepackage{hyperref}

\numberwithin{equation}{section}

\newtheorem{theorem}{Theorem}[section]
\newtheorem{lemma}[theorem]{Lemma}

\theoremstyle{remark}

\begin{document}

\title{Raimi's theorem for manifolds with circle symmetry}
\author{Dung The Tran\thanks{VNU University of Science, Hanoi. Email: tranthedung56@gmail.com}}

\date{}
\maketitle

\begin{abstract}
Raimi's classical theorem establishes a partition of the natural numbers with a remarkable unavoidability property: for every finite coloring of $\mathbb{N}$, there is a color class whose translate meets both parts of the partition in infinitely many points. Recently, Kang, Koh, and the author have extended this phenomenon to the circle group, proving that there exists a measurable partition of the circle such that every finite measurable cover admits a rotation whose image meets each part of the partition in positive measure. This paper shows that this phenomenon extends beyond compact abelian groups to a wide class of non-group geometric surfaces that still exhibit \textit{a hidden one-dimensional symmetry}. Specifically, we establish analogs of Raimi's theorem for three families of surfaces (with their natural surface measures): the unit sphere $\mathbb{S}^{n-1} \subset \mathbb{R}^n$, rotational power surfaces (such as cones and paraboloids), and circular cylindrical surfaces. The common feature is that each of these surfaces carries a natural measure-preserving action of the circle group by rotation in a fixed plane and admits a measurable trivialization as a product $\mathbb{T} \times Y$. This circle-bundle structure allows the measurable Raimi partition on the base circle to be lifted to an unavoidable partition on the manifold. Our approach is unified through a general circle-bundle theorem, which reduces all three geometric cases to verifying suitable equivariance and product disintegration properties of the surface measure.
\end{abstract}

\textbf{Keywords:} Raimi's theorem; Ramsey theory; measurable partitions; circle action; Rokhlin disintegration theorem; surface measure; circle bundle.

\textbf{MSC Classification}: 05D10,28A50,37A05,22F05


\section{Introduction}
Classical Ramsey theory typically asks which structured subsets must appear in any finite colouring of the natural numbers. Raimi proposed a complementary point of view: he asked which partitions of $\mathbb{N}$ cannot be avoided by any finite colouring of $\mathbb{N}$, even after allowing a shift.

More precisely, given a partition $\mathbb{N}=E_1\cup E_2$, we say that $(E_1,E_2)$ is \emph{unavoidable} if for every finite partition $\mathbb{N}=\bigcup\limits_{j=1}^t F_j$ there exist $j\in\{1,\dots,t\}$ and $k\in\mathbb{N}$ such that both $(F_j+k)\cap E_1$ and $(F_j+k)\cap E_2$ are infinite. This formalises Raimi’s viewpoint on unavoidable partitions under shifts.

A classical theorem of Raimi \cite{Raimi} shows that such partitions do exist: there is a partition $\mathbb{N}=E_1\cup E_2$ with the property that for every finite partition $\mathbb{N}=\bigcup\limits_{j=1}^t F_j$ with $t\in\mathbb{N}$, one can find $j\in\{1,\dots,t\}$ and $k\in\mathbb{N}$ such that both $(F_j+k)\cap E_1$ and $(F_j+k)\cap E_2$ are infinite. Raimi’s original proof used topological methods. Hindman later gave an elementary proof \cite[p.~180, Theorem~11.15]{HM79} and showed that one may take $E_1$ to be the set of natural numbers whose last non-zero digit in the ternary expansion is $1$, and $E_2=\mathbb{N}\setminus E_1$.

Strengthenings of this phenomenon, imposing density conditions on the partition sets or guaranteeing positive densities in the conclusion, were obtained by Hegyv\'ari \cite{NH} and by Bergelson and Weiss \cite{Bergelson2}. More recently, Hegyv\'ari, Pach, and Pham \cite{HPP25} introduced a powerful and flexible framework, combining tools from harmonic analysis, additive combinatorics, and group theory, which yields polynomial and finite-group extensions of Raimi’s theorem and makes its connection to Ramsey theory explicit. Their beautiful construction in the finite-group setting has since been extended successfully to the continuous setting for circles group $\mathbb{T}=\mathbb{R}/\mathbb{Z}$ by Kang, Koh, and the author in \cite{KKT25}, which is stated as follows.
\begin{theorem}[Kang--Koh--Tran {\cite{KKT25}}]\label{thm:KKT-circle}
Let $r,t\in\mathbb{N}$ with $r,t\ge2$. There exists a measurable partition
\[
\mathbb{T}= \bigcup_{i=1}^r E_i
\]
such that for every finite measurable cover
\[
C \subset F_1 \cup \cdots \cup F_t,
\]
there exist an index $m \in \{1,\dots,t\}$ and a rotation $R_{\theta}$ satisfying
\begin{align*}
    \mu_1\big(R_{\theta}(F_m) \cap E_i\big) > 0
\quad\text{for all } 1\le i\le r.
\end{align*}
\end{theorem}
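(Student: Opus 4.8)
The plan is to \emph{reduce the continuous statement to a finite, cyclic-group version of Raimi's theorem} coming out of the Hegyvári--Pach--Pham framework, and then to lift the resulting partition to the circle by a thickening/discretisation argument. Fix $r,t\ge 2$. From the finite-group results of \cite{HPP25} one expects an integer $N=N(r,t)$ and a partition $\mathbb{Z}/N\mathbb{Z}=\bar E_1\cup\cdots\cup\bar E_r$ into nonempty pieces such that for \emph{every} partition $\mathbb{Z}/N\mathbb{Z}=\bar F_1\cup\cdots\cup\bar F_t$ there are an index $m$ and a shift $k$ with $(\bar F_m+k)\cap\bar E_i\neq\emptyset$ for all $1\le i\le r$. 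Identify $\mathbb{Z}/N\mathbb{Z}$ with the subgroup $\tfrac1N\mathbb{Z}/\mathbb{Z}\subset C$, write $I_a:=[a/N,(a+1)/N)$ for the associated arcs, and \emph{define the circle partition by thickening}: $E_i:=\bigcup_{a\in\bar E_i} I_a$. Then each $E_i$ is Borel with $\mu_1(E_i)=|\bar E_i|/N>0$, and the rotation $R_{k/N}$ acts on arcs by $R_{k/N}(I_a)=I_{a+k}$.

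Now let $C\subset F_1\cup\cdots\cup F_t$ be an arbitrary finite measurable cover. \emph{Discretise it}: since $I_a\subset\bigcup_m F_m$ we have $\sum_m\mu_1(I_a\cap F_m)\ge 1/N$, so some index, call it $m(a)$, satisfies $\mu_1(I_a\cap F_{m(a)})\ge 1/(tN)$. Setting $\bar F_m:=\{a:m(a)=m\}$ gives a partition of $\mathbb{Z}/N\mathbb{Z}$ into $t$ parts, to which the finite Raimi property applies: there are $m$ and $k$ so that for each $i$ we can pick an arc $a_i\in\bar F_m$ with $a_i+k\in\bar E_i$. Rotating $F_m$ by $\theta:=k/N$ and using rotation invariance of $\mu_1$,
\[
\mu_1\big(R_{\theta}(F_m)\cap E_i\big)\;\ge\;\mu_1\big(R_{\theta}(F_m)\cap I_{a_i+k}\big)\;=\;\mu_1\big(F_m\cap I_{a_i}\big)\;\ge\;\frac{1}{tN}\;>\;0
\]
for all $1\le i\le r$, using $I_{a_i+k}\subset E_i$ and $a_i\in\bar F_m$. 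This is exactly the asserted conclusion, with a \emph{single} index $m$ and a \emph{single} rotation $R_\theta$, and it needs no appeal to Lebesgue density since the discretisation already produces a definite lower bound $1/(tN)$ on each arc.

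Thus the whole weight of the argument sits in the finite input, and \emph{the main obstacle is to secure that input in precisely the cyclic form needed}: a Raimi-type theorem on some $\mathbb{Z}/N\mathbb{Z}$ (a genuine cyclic group, since a product of finite groups would only give a torus, not the circle) in which one shifted colour class of the cover meets \emph{all} $r$ parts of a fixed partition simultaneously. Checking that the Hegyvári--Pach--Pham construction --- built from character/Bohr-type sets and controlled by harmonic analysis --- really yields this multi-part conclusion over a cyclic group is the crux. If one insists on never leaving the circle, the alternative is to imitate that construction intrinsically, taking $E_i$ to be a measurable Bohr/Riesz-product partition of $C$ and proving via Fourier decay that the convolutions $\theta\mapsto\mu_1\big(R_\theta(F_m)\cap E_i\big)$, which are continuous and have integral $\mu_1(F_m)\mu_1(E_i)>0$, are simultaneously positive at a common $\theta$ for some $m$; this is the more delicate route and would be the harder part. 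A few routine matters are handled in passing: one may assume the $F_m$ pairwise disjoint (replace $F_m$ by $F_m\setminus\bigcup_{m'<m}F_{m'}$), null parts never contribute and may be ignored, all constructed sets are Borel, and the rotations $R_{k/N}$ used form a subfamily of all rotations so nothing is lost in the passage from the conclusion to the theorem as stated.
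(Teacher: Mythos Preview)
The paper does not prove this statement: Theorem~\ref{thm:KKT-circle} is quoted from \cite{KKT25} and used throughout as a black-box input to the circle-bundle machinery of Section~\ref{section-main-result}. There is therefore no in-paper proof to compare your proposal against.

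That said, your reduction is internally sound and, judging from the paper's own narrative (``their beautiful construction in the finite-group setting has since been extended successfully to the continuous setting for circles by Kang, Koh, and the author''), is very likely close in spirit to what \cite{KKT25} actually does. The discretisation step is correct: the pigeonhole assignment $a\mapsto m(a)$ yields a $t$-colouring of $\mathbb{Z}/N\mathbb{Z}$, and the chain
\[
\mu_1\big(R_{k/N}(F_m)\cap E_i\big)\ \ge\ \mu_1\big(R_{k/N}(F_m)\cap I_{a_i+k}\big)\ =\ \mu_1\big(F_m\cap I_{a_i}\big)\ \ge\ \tfrac{1}{tN}
\]
is valid exactly as written. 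You have also correctly identified where the real content lies: one needs, from \cite{HPP25}, a Raimi-type partition of a genuinely \emph{cyclic} group $\mathbb{Z}/N\mathbb{Z}$ (a general finite abelian group would only lift to a torus) with the \emph{simultaneous} $r$-part conclusion, and $N$ is allowed to depend on both $r$ and $t$ since the theorem fixes $t$ before constructing $\{E_i\}$. That dependency is the one step your proposal leaves conditional; everything downstream of it is fine. The alternative intrinsic route you sketch (Bohr/Riesz-type sets on $C$ directly, with Fourier control of the convolutions $\theta\mapsto\mu_1(R_\theta(F_m)\cap E_i)$) is plausible but, as you note, would be the harder path.
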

The theorem provides a partition of the circle with the property that every finite measurable cover admits a translate meeting each partition element in positive measure.  This paper shows that this phenomenon extends beyond compact abelian groups to a wide class of non-group geometric surfaces that still exhibit \textit{a hidden one-dimensional symmetry}. This answers a question raised in \cite{KKT25} concerning the extension from the circle $\mathbb{T}$ to the unit sphere $\mathbb{S}^{n-1}\subset \mathbb{R}^n$.

Before stating the main results, we need to introduce some notation.

For notational convenience, given a point 
$x=(x_1,\dots,x_n)\in\mathbb{R}^n$, we write
\[
x' := (x_1,\dots,x_{n-1}) \in \mathbb{R}^{\,n-1},
\qquad
x'' := (x_3,\dots,x_{n-1}) \in \mathbb{R}^{\,n-3}.
\]
Let $\{E_i^\mathbb{T}\}_{i=1}^r$ be the measurable partition of $\mathbb{T}$ provided by Theorem~\ref{thm:KKT-circle}.  

We also introduce the rotation
\begin{align}\label{definition-rotation}
R_\theta(x_1,x_2,x'',x_n)
 :=\bigl(\,
        x_1\cos(2\pi\theta) - x_2\sin(2\pi\theta),\ 
        x_1\sin(2\pi\theta) + x_2\cos(2\pi\theta),\ 
        x'',\ x_n
      \,\bigr),
\end{align}
that is, rotation by angle $2\pi\theta$ in the $(x_1,x_2)$–plane.

We now state our first main result for spheres.


\begin{theorem}[Spheres]\label{thm:sphere-Raimi}
Let $r, t\ge 2$ be numbers in $\mathbb{N}$.
Let $n\ge 3$,
\[
\mathbb{S}^{n-1} := \{x\in\mathbb{R}^n : |x|=1\}
\]
be the unit sphere equipped with the normalized surface measure $\sigma_{n-1}$.
Then there exists a measurable partition $\{E_i^{\mathbb{S}^{n-1}}\}_{i=1}^r$ of $\mathbb{S}^{n-1}$ such that:
For every finite measurable cover
\[
\mathbb{S}^{n-1} \subset F_1\cup\cdots\cup F_t,
\]
there exist an index $m_0\in\{1,\dots,t\}$ and $\theta_0\in \mathbb{T}$ such that
\[
\sigma_{n-1}\big(R_{\theta_0}(F_{m_0})
    \cap E_i^{\mathbb{S}^{n-1}}\big)>0
\qquad\text{for all }1\le i\le r.
\]
where $R_\theta$ denotes the rotation defined in \eqref{definition-rotation} as restricted to $\mathbb{S}^{n-1}$.
\end{theorem}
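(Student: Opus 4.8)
The plan is to realize $\mathbb{S}^{n-1}$ as a trivial measurable circle bundle on which the rotations $R_\theta$ act by fiberwise translation, with the surface measure disintegrating as a product of Haar measure on the circle fiber and some measure on the base; then I would transfer a given cover to the base, apply Theorem~\ref{thm:KKT-circle} on almost every fiber, and re-integrate. This is exactly the circle-bundle scheme sketched in the introduction, so the geometric work is confined to verifying the trivialization and the product disintegration.

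\textbf{Step 1: equivariant trivialization.} First I would remove the $\sigma_{n-1}$-null set $Z := \{x\in\mathbb{S}^{n-1}: x_1=x_2=0\}$ and define, on $\mathbb{S}^{n-1}\setminus Z$, the map $\Phi(x) := (\phi(x), y(x))$, where $y(x) := (x_3,\dots,x_n)\in\mathbb{B}^{n-2} := \{y\in\mathbb{R}^{n-2}: |y|<1\}$ and $\phi(x)\in C$ is the angular coordinate determined by $(x_1,x_2) = \sqrt{1-|y(x)|^2}\,(\cos 2\pi\phi(x),\sin 2\pi\phi(x))$. Then $\Phi$ is a Borel isomorphism onto $C\times\mathbb{B}^{n-2}$ with Borel inverse, and by construction $\Phi\circ R_\theta = T_\theta\circ\Phi$, where $T_\theta(\phi,y) := (\phi+\theta,y)$ is translation by $\theta$ in the fiber. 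Thus the rotation action is conjugated to a fiber-preserving translation.

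\textbf{Step 2: product disintegration and the partition.} Next I would disintegrate $\Phi_*\sigma_{n-1}$ over the projection $(\phi,y)\mapsto y$: write $\Phi_*\sigma_{n-1} = \int \lambda_y\,d\nu(y)$, with $\nu$ the $y$-marginal and $\lambda_y$ a probability measure on the fiber $C\times\{y\}$. Since $\sigma_{n-1}$ is invariant under every $R_\theta$ and $\Phi$ conjugates $R_\theta$ to the fiber-preserving translation $T_\theta$, uniqueness of the disintegration forces each $\lambda_y$ to be translation-invariant on $C$, hence equal to the Haar measure $\mu_1$; therefore $\Phi_*\sigma_{n-1} = \mu_1\times\nu$ is a genuine product. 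I would then define $E_i^{\mathbb{S}^{n-1}} := \Phi^{-1}(E_i^C\times\mathbb{B}^{n-2})$ for $1\le i\le r$ (adjoining $Z$ to $E_1^{\mathbb{S}^{n-1}}$); this is a Borel partition of $\mathbb{S}^{n-1}$ whose $\Phi$-image meets every fiber $C\times\{y\}$ exactly in the circle Raimi partition $\{E_i^C\}$.

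\textbf{Step 3: fiberwise Raimi and extraction of a uniform pair.} Given a measurable cover $\mathbb{S}^{n-1}\subset F_1\cup\dots\cup F_t$, set $\tilde F_j := \Phi(F_j\setminus Z)$, so that the $\tilde F_j$ cover $C\times\mathbb{B}^{n-2}$. For $\nu$-a.e.\ $y$ the slices $\tilde F_j^y := \{\phi: (\phi,y)\in\tilde F_j\}$ form a measurable cover of $C$, and Theorem~\ref{thm:KKT-circle} produces $m\in\{1,\dots,t\}$ and $\theta\in C$ with $\mu_1((\tilde F_m^y+\theta)\cap E_i^C)>0$ for all $i$. The key point, which lets me avoid invoking a measurable selection theorem, is that $\theta\mapsto\mu_1((\tilde F_m^y+\theta)\cap E_i^C)$ is continuous (a convolution of indicator functions on $C$), so for the winning index $m$ the set of good $\theta$ is open, hence of positive $\mu_1$-measure. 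Writing $G := \{(y,m,\theta): \mu_1((\tilde F_m^y+\theta)\cap E_i^C)>0\ \forall i\}$, which is Borel because $(y,\theta)\mapsto\mu_1((\tilde F_m^y+\theta)\cap E_i^C) = \int_C\mathbf{1}_{\tilde F_m}(\psi-\theta,y)\,\mathbf{1}_{E_i^C}(\psi)\,d\mu_1(\psi)$ is jointly measurable (Tonelli), I would integrate in $y$ to obtain an index $m_0$ with $(\nu\times\mu_1)(\{(y,\theta):(y,m_0,\theta)\in G\})>0$, and integrate once more to pick a single $\theta_0\in C$ with $\nu(Y_0)>0$, where $Y_0 := \{y: (y,m_0,\theta_0)\in G\}$.

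\textbf{Step 4: re-integration.} Finally, since for $\nu$-a.e.\ $y$ the $y$-slice of $\Phi\bigl(R_{\theta_0}(F_{m_0})\cap E_i^{\mathbb{S}^{n-1}}\bigr)$ equals $(\tilde F_{m_0}^y+\theta_0)\cap E_i^C$, combining $\Phi_*\sigma_{n-1}=\mu_1\times\nu$ with Fubini gives
\[
\sigma_{n-1}\big(R_{\theta_0}(F_{m_0})\cap E_i^{\mathbb{S}^{n-1}}\big) = \int_{\mathbb{B}^{n-2}}\mu_1\big((\tilde F_{m_0}^y+\theta_0)\cap E_i^C\big)\,d\nu(y) \ \ge\ \int_{Y_0}\mu_1\big((\tilde F_{m_0}^y+\theta_0)\cap E_i^C\big)\,d\nu(y) > 0
\]
for every $i$, since the integrand is strictly positive on $Y_0$ and $\nu(Y_0)>0$. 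I expect the main obstacle to be not geometric but measure-theoretic: setting up the Borel trivialization and the product disintegration cleanly (uniqueness of disintegration, measurability of $\Phi$ and $\Phi^{-1}$), tracking the various null sets through the transfer, and — in lieu of a measurable selection — using the continuity of $\theta\mapsto\mu_1((\tilde F_m^y+\theta)\cap E_i^C)$ together with Tonelli's theorem to upgrade the purely fiberwise conclusion of Theorem~\ref{thm:KKT-circle} into a single pair $(m_0,\theta_0)$ that works on a set of fibers of positive $\nu$-measure.
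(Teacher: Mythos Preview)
Your proof is correct, and the geometric setup (the null set $Z$, the base $\mathbb{B}^{n-2}$, the angular coordinate $\phi$, and the product disintegration obtained from rotation invariance plus uniqueness of disintegration) coincides exactly with what the paper does. Where you diverge from the paper is in the \emph{extraction} of a single pair $(m_0,\theta_0)$ from the fiberwise information. The paper slices in the opposite direction: for each fixed $\theta\in C$ it looks at the vertical slices $A_m(\theta)=\{y:(\theta,y)\in\tilde F_m\}$, which cover $Y$, and by pigeonhole picks $m(\theta)$ with $\nu(A_{m(\theta)}(\theta))\ge 1/t$; the level sets $C_m=\{\theta:m(\theta)=m\}$ then form a measurable partition of $C$, to which Theorem~\ref{thm:KKT-circle} is applied \emph{once} to produce $m_0,\theta_0$, and a direct Fubini computation gives the lower bound $\sigma_{n-1}(R_{\theta_0}(F_{m_0})\cap E_i)\ge\tfrac{1}{t}\,\mu_1(R_{\theta_0}(C_{m_0})\cap E_i^C)>0$. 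You instead slice horizontally, invoke Theorem~\ref{thm:KKT-circle} on each fiber $C\times\{y\}$ separately, and then use the continuity of $\theta\mapsto\mu_1((\tilde F_m^y+\theta)\cap E_i^C)$ together with Tonelli to upgrade the per-fiber conclusion to a uniform $(m_0,\theta_0)$. Your route is a legitimate alternative and the continuity trick is a clean way to sidestep measurable selection; the paper's pigeonhole route is slightly more elementary (no continuity of translation is needed), yields an explicit quantitative lower bound, and is packaged as a general circle-bundle theorem that then handles spheres, power surfaces, and cylinders uniformly.
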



The next theorem establishes the result for rotational power surfaces, with cones and paraboloids arising as special cases.
\begin{theorem}[Rotational power surfaces]\label{thm:power-surface-Raimi}
Let $r, t\ge 2$ be numbers in $\mathbb{N}$. 
Let $n\ge 3$ and $k>0$. Define
\[
\mathcal{S}_{k,R}
 := \big\{(x',x_n)\in\mathbb{R}^{n-1}\times\mathbb{R}
        : x_n = |x'|^{\,k},\ 0<|x'|\le R\big\}.
\]
Let
$\sigma_{k,R}$ be the normalized surface measure on $\mathcal{S}_{k,R}$.
Then there exists a measurable partition
$\{E_i^{\mathcal{S}_{k,R}}\}_{i=1}^r$ of $\mathcal{S}_{k,R}$ with the following property:
For every finite measurable cover
\[
\mathcal{S}_{k,R} \subset F_1\cup\cdots\cup F_t
\]
there exist an index $m_0\in\{1,\dots,t\}$ and $\theta_0\in \mathbb{T}$ such that
\[
\sigma_{k,R}\big(R_{\theta_0}(F_{m_0})
       \cap E_i^{\mathcal{S}_{k,R}}\big)>0
\qquad\text{for all }1\le i\le r.
\]
In particular, $k=1$ yields the cone, and $k=2$ yields the paraboloid.
\end{theorem}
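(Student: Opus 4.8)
The plan is to realize $\mathcal{S}_{k,R}$ as a measurable trivial circle bundle $C\times Y$ over an explicit base $Y$, on which the action $R_\theta$ becomes translation in the $C$-factor and the surface measure becomes a product measure; then the partition $\{E_i^C\}$ of Theorem~\ref{thm:KKT-circle} can be pulled back along the fibres. Concretely, write a point of $\mathcal{S}_{k,R}$ as $(x_1,x_2,x'',x_n)$ with $x_n=(x_1^2+x_2^2+|x''|^2)^{k/2}$, introduce polar coordinates $(x_1,x_2)=(\rho\cos2\pi\theta,\ \rho\sin2\pi\theta)$ in the rotation plane, and set
\[
Y:=\bigl\{(\rho,x'')\in(0,\infty)\times\mathbb{R}^{\,n-3}:\ \rho^2+|x''|^2\le R^2\bigr\},\qquad
\Phi(\theta,\rho,x''):=\bigl(\rho\cos2\pi\theta,\ \rho\sin2\pi\theta,\ x'',\ (\rho^2+|x''|^2)^{k/2}\bigr).
\]
Then $\Phi\colon C\times Y\to\mathcal{S}_{k,R}$ is a measurable bijection with measurable inverse onto the complement of the $\sigma_{k,R}$-null set $\{\rho=0\}$, and it is equivariant: $\Phi(\theta+\phi,\rho,x'')=R_\phi\Phi(\theta,\rho,x'')$.

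The geometric input is the disintegration of the surface measure. Since $\mathcal{S}_{k,R}$ is the graph $x_n=|x'|^k$ over the punctured ball $\{0<|x'|\le R\}\subset\mathbb{R}^{\,n-1}$, its area element is $\sqrt{1+k^2|x'|^{2(k-1)}}\,dx'$, and in the coordinates above $dx'=dx_1\,dx_2\,dx''=2\pi\rho\,d\theta\,d\rho\,dx''$ while $|x'|^2=\rho^2+|x''|^2$. Hence
\[
d\sigma_{k,R}\ =\ \text{(const)}\cdot d\theta\ \otimes\ \Bigl(2\pi\rho\sqrt{1+k^2(\rho^2+|x''|^2)^{\,k-1}}\ d\rho\,dx''\Bigr),
\]
and the decisive point is that the density does not depend on $\theta$. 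Writing $\nu$ for the normalization of the measure in the large parentheses (finite since $k>0>2-n$ makes the singularity at $\rho=0$ integrable), one gets $\Phi_*(\mu_1\otimes\nu)=\sigma_{k,R}$, with $\mu_1$ normalized Haar measure on $C$. This is exactly the ``equivariance plus product disintegration'' required to run the circle-bundle reduction; everything after this point is formal.

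Now define the partition by $E_i^{\mathcal{S}_{k,R}}:=\Phi(E_i^C\times Y)$ for $i<r$, absorbing $\{\rho=0\}$ into $E_r^{\mathcal{S}_{k,R}}$. By equivariance and Fubini, for every measurable $F\subset\mathcal{S}_{k,R}$, every $\theta_0\in C$ and every $i$,
\[
\sigma_{k,R}\bigl(R_{\theta_0}(F)\cap E_i^{\mathcal{S}_{k,R}}\bigr)=\int_Y \mu_1\bigl((\theta_0+\widetilde F^{\,y})\cap E_i^C\bigr)\,d\nu(y),\qquad \widetilde F:=\Phi^{-1}(F),\ \widetilde F^{\,y}:=\{\theta:(\theta,y)\in\widetilde F\}.
\]
Given a finite measurable cover $\mathcal{S}_{k,R}\subset F_1\cup\cdots\cup F_t$, put $\widetilde F_j:=\Phi^{-1}(F_j)$; then $\bigcup_j\widetilde F_j$ is co-null in $C\times Y$, so Fubini yields a co-null set $Y_0\subset Y$ such that for $y\in Y_0$ the slices $\widetilde F_1^{\,y},\dots,\widetilde F_t^{\,y}$ cover $C$ off a $\mu_1$-null set; enlarging one slice by that null set gives a genuine finite measurable cover of $C$, and Theorem~\ref{thm:KKT-circle} produces some $m$ and some $\theta$ with $\mu_1\bigl((\theta+\widetilde F_m^{\,y})\cap E_i^C\bigr)>0$ for all $i$. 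Crucially, for fixed $y$ and $m$ the map $\theta\mapsto\mu_1\bigl((\theta+\widetilde F_m^{\,y})\cap E_i^C\bigr)$ is continuous (translation is continuous in $L^1(C)$), so
\[
V_m(y):=\bigl\{\theta\in C:\ \mu_1((\theta+\widetilde F_m^{\,y})\cap E_i^C)>0\ \text{for every }i\bigr\}
\]
is open; being nonempty for at least one $m$, it then has positive $\mu_1$-measure, so $h(y):=\sum_{m=1}^t\mu_1(V_m(y))>0$ for all $y\in Y_0$. A routine check shows $(\theta,y)\mapsto\mathbf{1}[\theta\in V_m(y)]$ is jointly measurable, hence $h$ is measurable with $\int_Y h\,d\nu>0$; therefore some fixed index $m_0$ satisfies $\int_Y\mu_1(V_{m_0}(y))\,d\nu(y)>0$, and by Fubini once more there is a fixed $\theta_0\in C$ with $\nu\{y:\theta_0\in V_{m_0}(y)\}>0$. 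Plugging $F=F_{m_0}$, this $\theta_0$ and this index into the displayed identity and restricting the integral to the positive-measure set $\{y:\theta_0\in V_{m_0}(y)\}$ gives $\sigma_{k,R}\bigl(R_{\theta_0}(F_{m_0})\cap E_i^{\mathcal{S}_{k,R}}\bigr)>0$ for all $i$, as required.

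I expect two points to carry the weight. The first is the computation that the surface density factors as $d\theta\otimes d\nu$: this is the only place the specific geometry of $\mathcal{S}_{k,R}$ (the rotational symmetry of the profile $x_n=|x'|^k$) is used, and checking the analogous factorization is precisely what a general circle-bundle theorem would require. The second, and the genuinely delicate step, is upgrading the per-fibre conclusion of Theorem~\ref{thm:KKT-circle}---which outputs a pair $(m,\theta)$ with no control over its dependence on $y$---to a single global pair $(m_0,\theta_0)$; the observation that makes this averaging work is that the ``good rotation'' set $V_m(y)$ is open, hence automatically of positive $\mu_1$-measure whenever it is nonempty, so that $h$ is a nonnegative measurable function that is strictly positive almost everywhere.
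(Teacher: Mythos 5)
Your proof is correct, but the key combinatorial reduction is genuinely different from the paper's. The paper slices the pulled-back cover \emph{over} $C$: for each fixed $\theta$ it pigeonholes in $Y$ to find a color $m$ with $\nu(A_m(\theta))\ge 1/t$, packages this index function into a partition $\{C_m\}$ of $C$, and then applies Theorem~\ref{thm:KKT-circle} \emph{once} to $\{C_m\}$; the resulting $(m_0,\theta_0)$ immediately gives the desired positive measure since $\nu(A_{m_0}(\cdot))\ge 1/t$ on $R_{\theta_0}(C_{m_0})\cap E_i^C$. You slice in the perpendicular direction, applying Theorem~\ref{thm:KKT-circle} to each fibre $\{\widetilde F_j^{\,y}\}_j$ of $C$ over $y\in Y$, which forces you to handle the measurable (and a priori uncontrolled) dependence of the output $(m,\theta)$ on $y$. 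Your remedy---noting that each good set $V_m(y)$ is open by $L^1$-continuity of translation, hence has positive $\mu_1$-measure when nonempty, so one can average over $y$ and invoke Fubini to extract a single $(m_0,\theta_0)$---is a nice observation and does close the gap, but it requires a continuity lemma and infinitely many invocations of the circle theorem, whereas the paper's pigeonhole-in-$Y$ route is purely measure-theoretic and uses the circle theorem only once. A second, cosmetic difference: you establish the product disintegration $\Phi_*(\mu_1\otimes\nu)=\sigma_{k,R}$ by computing the graph area element explicitly (and correctly noting the density is $\theta$-independent), whereas the paper derives it abstractly from Rokhlin disintegration plus uniqueness of rotation-invariant probability measures on each circular fibre; your version is more concrete, the paper's is the one that scales to the stated general circle-bundle theorem.
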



We conclude with a theorem addressing the cylindrical case.
\begin{theorem}[Cylindrical surface]\label{thm:cylinder-Raimi}
Let $r, t\ge 2$ be numbers in $\mathbb{N}$. 
Let $n\ge 3$, $R>0$, and $\Omega\subset\mathbb{R}^{n-2}$ be a bounded Borel set. Define the cylindrical surface
\[
\mathcal{C}_{R,\Omega}
:= \big\{(x_1,x_2, x'', x_n)\in\mathbb{R}^2\times\mathbb{R}^{n-2}
 : x_1^2+x_2^2=R^2,\ (x'', x_n)\in\Omega\big\}.
\]
Let $\mu_{\text{\tiny $R,\Omega$}}$ be the normalized surface measure on $\mathcal{C}_{R,\Omega}$.
Then there exists a measurable partition $\{E_i^{\mathcal{C}_{R,\Omega}}\}_{i=1}^r$ of $\mathcal{C}_{R,\Omega}$ with the following property:

For every finite measurable cover
\[
\mathcal{C}_{R,\Omega} \subset F_1\cup\cdots\cup F_t
\]
there exist an index $m_0\in\{1,\dots,t\}$ and $\theta_0\in \mathbb{T}$ such that
\[
\mu_{\text{\tiny $R,\Omega$}}\big(R_{\theta_0}(F_{m_0})\cap E_i^{\mathcal{C}_{R,\Omega}}\big)>0
\qquad\text{for all }1\le i\le r,
\]
where $R_\theta$ denotes the rotation defined in \eqref{definition-rotation} as restricted to $\mathcal{C}_{R,\Omega}$.
\end{theorem}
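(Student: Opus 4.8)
The plan is to exhibit $\mathcal{C}_{R,\Omega}$ as a metric product $C\times\Omega$ on which $R_\theta$ acts only on the circle factor, to take $E_i^{\mathcal{C}_{R,\Omega}}$ to be the ``cylinder'' over the circle partition $E_i^C$, and then to deduce the conclusion from Theorem~\ref{thm:KKT-circle} applied on almost every fibre, combined with a Fubini/discretization argument that passes from fibrewise data to a single cover index and a single rotation.

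\emph{Product structure.} Assuming $0<\lambda_{n-2}(\Omega)<\infty$ (which is what makes the normalized surface measure meaningful), parametrize $\mathcal{C}_{R,\Omega}$ by the measurable bijection
\[
\Phi\colon C\times\Omega\longrightarrow\mathcal{C}_{R,\Omega},\qquad
\Phi(\phi,\omega)=\bigl(R\cos(2\pi\phi),\,R\sin(2\pi\phi),\,\omega\bigr),\quad \omega=(x'',x_n).
\]
A one-line computation of the first fundamental form shows that $\partial_\phi\Phi\perp\partial_{\omega_j}\Phi$ and that all metric coefficients are constant, so the induced surface measure pulls back under $\Phi$ to a constant multiple of $d\phi\,d\omega$; after normalization it becomes the product $\mu_1\times\lambda_\Omega$, where $\mu_1$ is the normalized Haar (arclength) measure on $C$ used in Theorem~\ref{thm:KKT-circle} and $\lambda_\Omega$ is normalized Lebesgue measure on $\Omega$. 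In these coordinates the rotation \eqref{definition-rotation} restricted to $\mathcal{C}_{R,\Omega}$ is $(\phi,\omega)\mapsto(\phi+\theta,\omega)$, so it acts only on the circle factor and preserves $\mu_1\times\lambda_\Omega$. I then set $E_i^{\mathcal{C}_{R,\Omega}}:=\Phi(E_i^C\times\Omega)$; this is a measurable partition of $\mathcal{C}_{R,\Omega}$, and for any measurable $F\subset\mathcal{C}_{R,\Omega}$, writing $F^\omega:=\{\phi\in C:(\phi,\omega)\in\Phi^{-1}(F)\}$, Fubini yields
\begin{equation}\label{plan-fubini}
\mu_{R,\Omega}\bigl(R_\theta(F)\cap E_i^{\mathcal{C}_{R,\Omega}}\bigr)
=\int_\Omega\mu_1\bigl(R_\theta(F^\omega)\cap E_i^C\bigr)\,d\lambda_\Omega(\omega).
\end{equation}

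\emph{Fibrewise argument.} Let $\mathcal{C}_{R,\Omega}\subset F_1\cup\cdots\cup F_t$ be a finite measurable cover, with $r,t\ge2$. For every $\omega$ one has $C=\bigcup_{j}F_j^\omega$, and by Fubini the slices $F_j^\omega$ are $\mu_1$-measurable for a.e.\ $\omega$; for such $\omega$, Theorem~\ref{thm:KKT-circle} provides $m(\omega)\in\{1,\dots,t\}$ and $\theta(\omega)\in C$ with $\mu_1\bigl(R_{\theta(\omega)}(F_{m(\omega)}^\omega)\cap E_i^C\bigr)>0$ for all $i$. Since $\theta\mapsto\mu_1(R_\theta(A)\cap B)=\int_C\mathbf{1}_A(\phi-\theta)\mathbf{1}_B(\phi)\,d\phi$ is continuous (continuity of translation in $L^1(C)$), the set of $\theta$ making these $r$ inequalities hold simultaneously is open and nonempty, hence contains a rational $q(\omega)\in\mathbb{Q}/\mathbb{Z}$. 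For $m\in\{1,\dots,t\}$ and $q\in\mathbb{Q}/\mathbb{Z}$ put
\[
A_{m,q}:=\bigl\{\omega\in\Omega:\ \mu_1\bigl(R_q(F_m^\omega)\cap E_i^C\bigr)>0\ \text{for every } 1\le i\le r\bigr\}.
\]
Each $A_{m,q}$ is measurable, because $(\phi,\omega)\mapsto\mathbf{1}_{F_m}(\phi-q,\omega)\mathbf{1}_{E_i^C}(\phi)$ is jointly measurable and Tonelli makes $\omega\mapsto\mu_1(R_q(F_m^\omega)\cap E_i^C)$ measurable. By the previous sentence $\bigcup_{m,q}A_{m,q}$ has full $\lambda_\Omega$-measure, and this is a countable union, so $\lambda_\Omega(A_{m_0,q_0})>0$ for some $m_0,q_0$.

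\emph{Conclusion and main obstacle.} Take $\theta_0:=q_0$ and $m_0$ as above. Applying \eqref{plan-fubini} to $F=F_{m_0}$ and $\theta=q_0$,
\[
\mu_{R,\Omega}\bigl(R_{\theta_0}(F_{m_0})\cap E_i^{\mathcal{C}_{R,\Omega}}\bigr)
=\int_\Omega\mu_1\bigl(R_{q_0}(F_{m_0}^\omega)\cap E_i^C\bigr)\,d\lambda_\Omega(\omega)
\ \ge\ \int_{A_{m_0,q_0}}\mu_1\bigl(R_{q_0}(F_{m_0}^\omega)\cap E_i^C\bigr)\,d\lambda_\Omega(\omega)\ >\ 0
\]
for every $i$, since the integrand is strictly positive on $A_{m_0,q_0}$ and $\lambda_\Omega(A_{m_0,q_0})>0$; this gives Theorem~\ref{thm:cylinder-Raimi}. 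The only delicate points are the measurable bookkeeping in the fibrewise step -- handled above by the rational discretization and Tonelli -- and the identification of the surface measure with a product. For the cylinder the latter is immediate because the induced Riemannian metric is literally a product metric; I expect this is precisely why the cylinder is the easiest of the three cases, whereas for $\mathbb{S}^{n-1}$ and $\mathcal{S}_{k,R}$ the surface measure is \emph{not} a product and one must instead disintegrate it along the $R_\theta$-orbits and verify $R_\theta$-equivariance of the disintegration, which is where the real work sits and which motivates the general circle-bundle framework.
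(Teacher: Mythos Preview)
Your proof is correct, and both the product identification $\Phi:C\times\Omega\to\mathcal{C}_{R,\Omega}$ and the lifted partition $E_i^{\mathcal{C}_{R,\Omega}}=\Phi(E_i^C\times\Omega)$ coincide with what the paper does. The route to the pair $(m_0,\theta_0)$, however, is genuinely different. The paper (via its general circle-bundle Theorem~\ref{thm:general-circle-bundle}) slices in the \emph{orthogonal} direction: for each $\theta\in C$ it considers $A_m(\theta)=\{\omega:(\theta,\omega)\in\Phi^{-1}(F_m)\}$, pigeonholes to select $m(\theta)$ with $\lambda_\Omega(A_{m(\theta)}(\theta))\ge 1/t$, forms the measurable partition $C_m=\{\theta:m(\theta)=m\}$ of $C$, and then applies Theorem~\ref{thm:KKT-circle} \emph{once} to this partition; Fubini yields the quantitative bound $\mu_{R,\Omega}\bigl(R_{\theta_0}(F_{m_0})\cap E_i\bigr)\ge \tfrac{1}{t}\,\mu_1\bigl(R_{\theta_0}(C_{m_0})\cap E_i^C\bigr)>0$. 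You instead apply Theorem~\ref{thm:KKT-circle} on every fibre $C\times\{\omega\}$, then use $L^1$-continuity of translation on $C$ to discretize the good rotation to a rational before pigeonholing over the countable set $\{1,\dots,t\}\times(\mathbb{Q}/\mathbb{Z})$. The paper's route avoids any continuity input, gives an explicit lower bound, and is what allows the argument to be packaged as a purely measurable circle-bundle theorem applicable uniformly to the sphere and power-surface cases; your route is a clean direct argument that exploits the topology of $C$ and sidesteps the need for a general framework.
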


For readers with a combinatorial background, we stress that the analytic framework employed here is conceptually minimal. The only genuinely measure–theoretic ingredient is the standard disintegration theorem for probability measures (Rokhlin’s theorem). Once the surface measure is decomposed along circle orbits, rotational invariance forces the conditional measures on almost every fibre to coincide with Haar measure on the circle. After this structural step, the remainder of the argument is elementary: a Fubini-type slicing argument, a pigeonhole principle on the base circle, and an application of the measurable Raimi partition established in \S\ref{section-cylinder}.

Therefore, while the problem is formulated geometrically, its essential mechanism is combinatorial in nature: unavoidable partitions of the circle are transferred along circle fibres, yielding unavoidable partitions of the manifold.

\subsection{Intuition and Proof Strategy}

The extension of Raimi's theorem beyond the circle is guided by a simple structural idea: if a geometric surface carries a measure-preserving circle action whose orbits are one-dimensional, then the space behaves, up to a null set, like a measurable product
\[
X \simeq \mathbb{T} \times Y.
\]
This hidden circle factor allows the one-dimensional Raimi phenomenon to be transported to higher-dimensional settings.

\paragraph{Why rotational symmetry is the natural setting.}
Raimi’s theorem concerns translations on $\mathbb{T}$.
To extend this phenomenon beyond groups, it is therefore natural to consider spaces that possess a distinguished circular direction along which translations act intrinsically.
The sphere $\mathbb{S}^{n-1}$, rotational power surfaces,
and circular cylinders all share this feature:
rotation in the $(x_1,x_2)$–plane preserves the surface and preserves its natural surface measure.
Moreover, away from a lower-dimensional singular set,
each point lies on a circular orbit of this action.
This makes such surfaces the natural geometric setting
for lifting the circle-based combinatorial phenomenon.

\paragraph{A concrete two-dimensional example.}
To illustrate the mechanism, consider the torus
\[
\mathbb{T}^2 = \mathbb{T} \times \mathbb{T}
\]
with rotation acting only on the first coordinate:
\[
R_\alpha(\theta_1,\theta_2) = (\theta_1+\alpha,\theta_2).
\]
Here the product structure is explicit:
$\theta_1$ parametrizes the circle fibres,
while $\theta_2$ plays the role of the base variable.
A measurable partition $\{E_i^{\mathbb{T}}\}$ of the first circle immediately induces a partition of the torus via
\[
E_i^{\mathbb{T}^2} = E_i^{\mathbb{T}} \times \mathbb{T}.
\]
The Raimi property on $\mathbb{T}$ then lifts directly to $\mathbb{T}^2$ by integrating over the second coordinate.
The geometric surfaces treated in this paper behave measurably in exactly this way, even though the product structure is no longer globally smooth.

\paragraph{How the lifting mechanism works.}
Suppose $X$ admits a measurable parametrization
\[
\Phi : \mathbb{T} \times Y \longrightarrow X \setminus N
\]
intertwining translation with geometric rotation:
\[
\Phi(\theta+\alpha,y)=R_\alpha(\Phi(\theta,y)).
\]
Given a finite measurable cover $\{F_m\}_{m=1}^t$ of $X$,
we pull it back via $\Phi$ to obtain a cover of $\mathbb{T}\times Y$.
For each angle $\theta$, we examine the fibre slice
\[
A_m(\theta)=\{y\in Y:(\theta,y)\in\Phi^{-1}(F_m)\}.
\]
Since the slices cover $Y$, a pigeonhole argument implies that for every $\theta$, some index $m$ satisfies $\nu(A_m(\theta))\ge \frac{1}{t}$.
This produces a measurable partition of the base circle.
Applying the Raimi theorem on $\mathbb{T}$ then yields a rotation forcing one cover element to intersect every partition component of the circle in positive measure.
Integrating back over $Y$ yields positive surface-measure intersections in $X$.

\paragraph{Why measure theory is essential.}
The product structure of $X$ is not smooth in the usual sense. For example, on the sphere the circle orbits degenerate along a codimension-two set, and rotational power surfaces have a singular axis.
Thus the decomposition
\[
d\mu_X(x)=d\mu_{\mathbb{T}}(\theta)\,d\nu_Y(y)
\]
exists only in a measurable sense. Rokhlin’s disintegration theorem ensures that the invariant surface measure splits into conditional measures along circle fibres.
Rotational invariance then forces these conditional measures to coincide with Haar measure on $\mathbb{T}$.
This is the only genuinely analytic ingredient;
once established, the argument is entirely combinatorial.

\paragraph{Outline of the geometric proofs.}
The proofs of Theorems~\ref{thm:sphere-Raimi}–\ref{thm:cylinder-Raimi} follow a unified scheme as follow:
\begin{itemize}
\item Remove a null singular set $N$ where the circle action degenerates.
\item Construct a measurable bijection
      \[
      \Phi:\mathbb{T}\times Y\to X\setminus N.
      \]
\item Verify equivariance under rotations.
\item Use disintegration to show that the surface measure factorizes.
\item Apply the measurable Raimi partition on $\mathbb{T}$.
\end{itemize}

Thus the proofs reduce to verifying the structural hypotheses of the general circle-bundle theorem introduced in Section~\ref{section-main-result}.
Once this framework is in place, the partition constructed in the previous work of Kang, Koh, and the author on the circle $\mathbb{T}$ automatically induces the desired Raimi-type partitions on $\mathbb{S}^{n-1}$,
on rotational power surfaces,
and on circular cylindrical surfaces.

\medskip

Our approach naturally leads to the following open question: 
Let $M$ be a compact hyperbolic surface with its normalized area measure. Does $M$ admit a measurable Raimi--type partition (in the sense of this paper), even though it has no circle action and therefore lies outside our circle--bundle framework?

{\bf The paper is organized as follows.}
In Section~\ref{section-main-result}, we establish the general circle-bundle theorem, which serves as the main structural tool of the paper. Sections~\ref{section-sphere}, \ref{section-power-surface}, and \ref{section-cylinder} are devoted to the proofs of the three principal applications: the sphere, the rotational power surfaces, and the circular cylindrical surfaces, respectively.  Each result follows by verifying the hypotheses of the general theorem and applying the measurable Raimi partition on the base circle.


\section{Raimi partitions on circle bundles}\label{section-main-result}

We now present the following general theorem, of which the spherical, cylindrical, and rotational power surface cases are specific instances.

The key idea is simple: if a surface $X$ looks like
circle $\mathbb{T} \times Y$ base locally, and rotations act by rotating the circle factor, then we can lift
Raimi partitions from $\mathbb{T} \times Y$ to $X$. The following theorem makes this precise.
\begin{theorem}\label{thm:general-circle-bundle}
Let $(X,\mu)$ be a probability space equipped with a measurable, measure-preserving action
\[
\{R_\theta\}_{\theta\in \mathbb{T}},\qquad \mathbb{T}:=\mathbb{R}/\mathbb{Z},
\]
of the circle group $\mathbb{T}$. Assume there exist
\begin{itemize}
    \item a probability space $(Y,\nu)$,
    \item a measurable set $N\subset X$ with $\mu(N)=0$,
    \item a measurable bijection
    \[
        \Phi : \mathbb{T} \times Y \longrightarrow X\setminus N,
    \]
\end{itemize}
satisfying:
\begin{enumerate}
    \item[(i)] (\emph{Equivariance}) For all $\theta,\alpha\in \mathbb{T}$ and $y\in Y$,
    \[
        \Phi(\theta+\alpha,y) = R_\alpha(\Phi(\theta,y)).
    \]
    \item[(ii)] (\emph{Product disintegration}) For every bounded measurable function $f:X\to\mathbb{R}$,
    \[
        \int_X f(x)\,d\mu(x)
        = \int_\mathbb{T} \int_Y f(\Phi(\theta,y))\,d\nu(y)\,d\mu_1(\theta),
    \]
    where $\mu_1$ is the normalized Lebesgue measure on $\mathbb{T}$.
\end{enumerate}
Let $\{E_i^\mathbb{T}\}_{i=1}^r$ be a measurable partition of $\mathbb{T}$ with the Raimi property from Theorem~\ref{thm:KKT-circle}. Define 
\begin{equation*}
    \begin{cases}
        E_1^X := \Phi(E_1^\mathbb{T}\times Y) \cup N,\\
        E_i^X:=\Phi(E_i^\mathbb{T}\times Y), ~~~~~~~~2 \leq i\leq r.
    \end{cases}
\end{equation*}
Then $\{E_i^X\}_{i=1}^r$ is a measurable partition of $X$ with the following property:

For every finite measurable cover $\{F_m\}_{m=1}^t$ of $X$, there exist an index $m_0\in\{1,\dots,t\}$ and a rotation $R_{\theta_0}$ with $\theta_0\in \mathbb{T}$ such that
\[
    \mu\big(R_{\theta_0}(F_{m_0})\cap E_i^X\big) > 0
    \qquad\text{for all } 1\le i\le r.
\]
\end{theorem}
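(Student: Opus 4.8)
The plan is to transport the Raimi property from the base circle $C$ to $X$ using the trivialization $\Phi$, in three stages: first verify that $\{E_i^X\}$ is a genuine measurable partition of $X$; then reduce an arbitrary finite measurable cover of $X$ to a finite measurable cover of $C$ by slicing; and finally invoke Theorem~\ref{thm:KKT-circle} on $C$ and lift the resulting conclusion back up through $\Phi$, using equivariance to match rotations and product disintegration to convert positive measure on $C$ into positive measure on $X$.

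For the partition claim, since $\Phi$ is a measurable bijection from $C\times Y$ onto $X\setminus N$ and $\{E_i^C\}$ partitions $C$, the sets $\{E_i^C\times Y\}$ partition $C\times Y$, hence their $\Phi$-images partition $X\setminus N$; adjoining $N$ to the first piece gives a partition of all of $X$, and each piece is measurable because $\Phi$ and $\Phi^{-1}$ are measurable and $N$ is measurable. Next, given a finite measurable cover $X\subset F_1\cup\cdots\cup F_t$, I would pull back to $C\times Y$: the sets $\tilde F_m := \Phi^{-1}(F_m\setminus N)$ form a measurable cover of $C\times Y$. For each fixed $y\in Y$ consider the slice $\tilde F_m^{(y)} := \{\theta\in C:(\theta,y)\in\tilde F_m\}$; for every $y$ these $t$ sets cover $C$. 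I want a single $y$ that ``works'', so I apply Theorem~\ref{thm:KKT-circle} to the cover $\{\tilde F_m^{(y)}\}_{m=1}^t$ of $C$: for each $y$ there are $m(y)\in\{1,\dots,t\}$ and $\theta(y)\in C$ with $\mu_1\big(R_{\theta(y)}(\tilde F_{m(y)}^{(y)})\cap E_i^C\big)>0$ for all $i$. A measurable-selection argument (slicing is jointly measurable in $(\theta,y)$, and the finitely many choices of $m$ partition $Y$ into measurable pieces after further decomposing on rational approximations of $\theta(y)$) lets me find a positive-measure set $Y_0\subset Y$, an index $m_0$, and an angle $\theta_0$ such that the displayed inequality holds with $m(y)=m_0$, $\theta(y)=\theta_0$ for all $y\in Y_0$; here I only need it for a single such $y$, so even a crude pigeonhole over the countably many rational angles suffices once I know the set of ``good $(y,m,\theta)$'' is measurable.

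With $m_0$, $\theta_0$, and a good $y$ (in fact a positive-measure set $Y_0$ of them) in hand, I conclude as follows. By equivariance (i), $\Phi(\theta+\theta_0,y)=R_{\theta_0}(\Phi(\theta,y))$, so $\Phi$ intertwines the shift by $\theta_0$ on the $C$-factor with $R_{\theta_0}$ on $X$; consequently, for $(\theta,y)\in C\times Y_0$ we have $\Phi(\theta,y)\in R_{\theta_0}(F_{m_0})\cap E_i^X$ precisely when $\theta\in R_{\theta_0}(\tilde F_{m_0}^{(y)})\cap E_i^C$ (modulo the null set $N$, which is harmless). Therefore, applying the product disintegration (ii) to $f=\mathbf 1_{R_{\theta_0}(F_{m_0})\cap E_i^X}$,
\[
\mu\big(R_{\theta_0}(F_{m_0})\cap E_i^X\big)
= \int_Y \mu_1\big(R_{\theta_0}(\tilde F_{m_0}^{(y)})\cap E_i^C\big)\,d\nu(y)
\ge \int_{Y_0} \mu_1\big(R_{\theta_0}(\tilde F_{m_0}^{(y)})\cap E_i^C\big)\,d\nu(y) > 0,
\]
since the integrand is strictly positive on $Y_0$ and $\nu(Y_0)>0$. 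As this holds for every $i\in\{1,\dots,r\}$, the pair $(m_0,\theta_0)$ witnesses the Raimi property on $X$.

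The main obstacle I anticipate is the measurability bookkeeping in the slicing step: ensuring that $(\theta,y)\mapsto \mathbf 1_{\tilde F_m}(\theta,y)$ behaves well under fixing $y$, that the function $y\mapsto\mu_1(R_{\theta}(\tilde F_m^{(y)})\cap E_i^C)$ is measurable (this is a standard Fubini/section argument, since $\tilde F_m$ is a measurable subset of the product), and that the set of $y$ for which Theorem~\ref{thm:KKT-circle} succeeds with a prescribed $(m_0,\theta_0)$ — or at least with $\theta_0$ ranging over a countable dense set, which is all that is needed — is measurable of positive measure. Everything else (the partition verification, the equivariance computation, and the final disintegration estimate) is routine once this selection is secured.
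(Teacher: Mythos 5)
Your overall strategy is genuinely different from the paper's, and the difference matters because it creates a gap that the paper's route avoids entirely.

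You slice the pulled-back cover $\widetilde F_m \subset C\times Y$ in the $Y$-direction: for each fixed $y$ you view $\{\widetilde F_m^{(y)}\}_{m=1}^t$ as a cover of $C$, apply Theorem~\ref{thm:KKT-circle} once per fibre, and then try to select a single pair $(m_0,\theta_0)$ that works on a positive-$\nu$-measure set $Y_0$. The paper instead slices in the $\theta$-direction: for each fixed $\theta$ it considers $A_m(\theta)=\{y:(\theta,y)\in F_m^{\mathrm e}\}\subset Y$, observes by the pigeonhole principle that some $m$ satisfies $\nu(A_m(\theta))\ge 1/t$, lets $m(\theta)$ be the least such index, and thereby builds a single measurable partition $\{C_m\}_{m=1}^t$ of $C$ by $C_m=\{\theta:m(\theta)=m\}$. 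It then applies Theorem~\ref{thm:KKT-circle} exactly once, to the partition $\{C_m\}$, and obtains $(m_0,\theta_0)$ directly; the closing estimate $\mu(R_{\theta_0}(F_{m_0})\cap E_i^X)\ge \frac1t\,\mu_1(R_{\theta_0}(C_{m_0})\cap E_i^C)>0$ then follows cleanly from (ii) and the lower bound $\nu(A_{m_0}(\theta-\theta_0))\ge 1/t$ on $R_{\theta_0}(C_{m_0})$. This is the cleverer reduction: it needs no selection theorem because the only choice made (the index $m(\theta)$) ranges over a finite set and is made by a $\min$, which is automatically measurable.

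The gap in your version is the measurable selection. You write that ``a crude pigeonhole over the countably many rational angles suffices once I know the set of good $(y,m,\theta)$ is measurable.'' Measurability alone does \emph{not} suffice: it gives you that each $y$-section $\{\theta:(y,m,\theta)\ \text{good}\}$ is a nonempty measurable set, but a nonempty measurable subset of $C$ need not contain a rational, so the countable family $\{(m,q):1\le m\le t,\ q\in\mathbb Q/\mathbb Z\}$ need not cover $Y$. What you actually need is that for each fixed $y$ and $m$ the map $\theta\mapsto\mu_1\bigl(R_\theta(\widetilde F_m^{(y)})\cap E_i^C\bigr)$ is \emph{continuous} in $\theta$ (a standard convolution fact, $\mathbf 1_A * \check{\mathbf 1}_B$), so that the set of good $\theta$ is open and nonempty and therefore meets $\mathbb Q/\mathbb Z$; only then does the pigeonhole over $\{1,\dots,t\}\times(\mathbb Q/\mathbb Z)$ produce a positive-measure $Y_0$ and a fixed $(m_0,\theta_0)$. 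You gesture at ``rational approximations'' but never state or use this continuity, and your stated justification (measurability) is insufficient. The final disintegration computation and the equivariance bookkeeping are correct once $Y_0,m_0,\theta_0$ are in hand, so the gap is fillable --- but as written the selection step does not go through, and the paper's $\theta$-slicing makes the whole issue disappear.
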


\begin{figure}[h]
\centering
\begin{tikzpicture}[scale=1, >=Stealth]

\node at (-6,5.5) {$\mathbb{T}\times Y$};

\draw[very thick, fill=orange!30, draw=orange!80!black] (-6,4) circle (0.7);
\node at (-6,4) {$\color{orange}\mathbb{T}$};

\draw[->, thick] (-5.2,4.6) arc (40:120:0.8);
\node at (-4.9,4.2) {$+\alpha$};

\fill[red] (-6,2.5) circle (2pt);
\node[right] at (-5.8,2.5) {$\color{red}(\theta,y)$};

\draw[dashed] (-6,3.25) -- (-6,-2);
\draw[dashed, gray!60] (-6,3.25) -- (-6,3.8);
\node[left] at (-6.7,2.2) {\scriptsize fiber};
\draw[dashed, gray!70] (-6.6,3.7) -- (-6.6,0.5);

\draw[thick, fill=green!5, draw=green!80!black] (-6,-2) ellipse (1.6 and 0.6);
\node at (-6,-2) {$Y$};
\fill[green!50!black] (-6.7,-2) circle (2pt);
\node[left] at (-6.9,-2.0) {\scriptsize $y$};

\draw[->, very thick, blue] (-4,1) -- (0,1);
\node[above] at (-2,1.1) {\color{blue}$\Phi$};
\node[below] at (-1.2,0.8) {\scriptsize bijection to $X\setminus N$};
\node at (4,4.5) {$X$ (\textit{manifold})};  

\coordinate (A) at (2.4,-1.2);    
\coordinate (B) at (5.6,-1.2);    
\coordinate (C) at (1.6,3.3);    
\coordinate (D) at (6.4,3.3);   

\draw[ultra thick, blue!80!black] (A) to[out=-80, in=-100] (B);
\draw[ultra thick, blue!80!black] (C) to[out=-20, in=-200] (D);
\draw[blue!80!black, ultra thick] (1.6,3.3) -- (2.4,-1.2);    
\draw[blue!80!black, ultra thick] (6.4,3.3) -- (5.6,-1.2);    

\fill[blue!80!yellow!10] 
    (A) to[out=-80, in=-100] (B) 
    -- (D) 
    to[out=-200, in=-20] (C) 
    -- cycle;

\draw[dashed, red!80!black, thick] (4,2) ellipse (1.4 and 0.6); 

\fill[purple] (4.5,1.55) circle (2.4pt);         
\node[right] at (5.4,2) {\color{red}$\Phi(\theta,y)$};   
\node[below] at (4.5,1.2) {\color{purple}$R_\alpha(\Phi(\theta,y))$}; 
\draw[very thin, fill=red, draw=black] (5.1,2) circle (3.0pt);  

\draw[->, thick] (5.3,2.6) arc (30:120:0.6);      
\node[right] at (5.3,2.6) {\color{blue}$R\_\alpha$};

\fill[red!60] (4,-0.5) ellipse (0.3 and 0.15);     
\node at (4.7,-0.5) {\color{red}$N$};             
\node at (4.7,-1.1) {\color{red}(\scriptsize $\mu(N)=0$)}; 

\node[draw=yellow!80!black, fill=yellow!20, rounded corners] at (4,-3)  
{\color{red}\scriptsize Circle orbit: $\{R_\alpha(x):\alpha\in\mathbb{T}\}$};

\node[draw, rounded corners, fill=gray!10, minimum width=16cm]
at (-1,-6)
{
\begin{minipage}{15cm}
\vspace{0.3cm}
\begin{center}
    \textbf{Key Properties}
\end{center}

\textbf{\color{blue}(i) Equivariance:} $\quad \Phi(\theta+\alpha,y)=R_\alpha(\Phi(\theta,y))$

\medskip
\textbf{\color{blue}(ii) Product disintegration:}
\[
\int_X f(x)\,d\mu(x)
=\int_{\mathbb{T}}\int_Y f(\Phi(\theta,y))\,d\nu(y)\,d\mu_{\mathbb{T}}(\theta)
\]
\vspace{0.2cm}
\end{minipage}
};

\end{tikzpicture}
 \caption{Circle bundle structure for Theorem \ref{thm:general-circle-bundle}}
\end{figure}

\begin{proof}

Since $\Phi : \mathbb{T}\times Y \to X\setminus N$ is a measurable bijection and $\{E_i^\mathbb{T}\}_{i=1}^r$ is a measurable partition of $\mathbb{T}$, it follows that
\[
\{\Phi(E_i^\mathbb{T}\times Y)\}_{i=1}^r
\]
is a measurable partition of $X\setminus N$. 

By condition (i) of the theorem, the action $\{R_\alpha\}$ and the
parametrization $\Phi$ are related by
\[
    R_\alpha(\Phi(\theta,y)) = \Phi(\theta+\alpha,y)
    \qquad\text{for all }\theta,\alpha\in \mathbb{T},\ y\in Y.
\]
Since $\Phi$ is bijective from $\mathbb{T}\times Y$ onto $X\setminus N$, this identity determines the action on $X\setminus N$ uniquely; we extend it to all of $X$ by letting $R_\alpha$ act as the identity on $N$.

Adding $N$ to $E_1^X$ therefore yields
\[
X = (X\setminus N)\cup N = \bigcup_{i=1}^r E_i^X,
\]
with the $E_i^X$ pairwise disjoint.  
Thus, $\{E_i^X\}_{i=1}^r$ is a measurable partition of $X$.

Let $\{F_m\}_{m=1}^t$ be a finite measurable cover of $X$.
Since $\mu(N)=0$, all measure-theoretic computations may be carried out on $X\setminus N$ without loss of generality. In particular, we may restrict attention to the cover
\[
X\setminus N \subset \bigcup_{m=1}^t F_m.
\]
As $\Phi$ is a measurable bijection from $\mathbb{T}\times Y$ onto $X\setminus N$, we may pull back the cover by defining
\[
F_m^{\mathrm e}
   := \Phi^{-1}\bigl(F_m\cap (X\setminus N)\bigr)
   \subset \mathbb{T}\times Y, \qquad 1\le m\le t.
\]
Then $\{F_m^{\mathrm e}\}_{m=1}^t$ is a measurable cover of $\mathbb{T}\times Y$.

Writing points of $\mathbb{T}\times Y$ as $(\theta,y)$, for each $\theta\in \mathbb{T}$, we define the slices
\[
A_m(\theta) := \{\, y\in Y : (\theta,y)\in F_m^{\mathrm e}\,\}, \qquad 1\le m\le t.
\]
Since the sets $F_m^{\mathrm e}$ cover $\mathbb{T}\times Y$, it follows that
\[
\bigcup_{m=1}^t A_m(\theta) = Y
\qquad\text{for every } \theta\in \mathbb{T}.
\]
For each fixed $\theta\in \mathbb{T}$, we have
\[
    \sum_{m=1}^t \nu(A_m(\theta))
    \ge \nu\Big(\bigcup_{m=1}^t A_m(\theta)\Big)
    = \nu(Y)=1.
\]
By the pigeonhole principle, for each $\theta\in \mathbb{T}$, there exists at least one index $m\in\{1,\dots,t\}$ such that
\[
    \nu(A_m(\theta)) \ge \frac1t.
\]
Define an index function $m(\theta)$ by
\begin{align*}
    m: \mathbb{T} \to & \{1,\dots,t\},\\
    \theta \mapsto & m(\theta) := \min\Big\{1\le m\le t : \nu(A_m(\theta))\ge \frac1t\Big\}.
\end{align*}
We now verify that $m$ is measurable.
Since each $F_m^{\mathrm e}$ is measurable in $\mathbb{T}\times Y$, its indicator $\mathbf{1}_{F_m^{\mathrm e}}(\theta,y)$ is measurable. By Fubini's theorem, the function
\[
    \theta \mapsto \int_Y \mathbf{1}_{F_m^{\mathrm e}}(\theta,y)\,d\nu(y)
\]
is measurable, and this integral equals
\[
    \int_Y \mathbf{1}_{F_m^{\mathrm e}}(\theta,y)\,d\nu(y)
    = \nu\Big(\{y:(\theta,y)\in F_m^{\mathrm e}\}\Big)
    = \nu(A_m(\theta)).
\]
Hence for each $m$, the map $\theta\mapsto\nu(A_m(\theta))$ is measurable. Therefore, the sets
\[
    \bigg\{\theta : \nu(A_m(\theta))\ge \frac{1}{t} \bigg\}
\]
are measurable. Taking the minimum over finitely many measurable sets shows that $\theta\mapsto m(\theta)$ is measurable.

Now define a measurable partition of $\mathbb{T}$ by
\[
    C_m := \{\theta\in \mathbb{T}: m(\theta)=m\},\qquad 1\le m\le t.
\]
By construction, for every $\theta\in C_m$ we have
\begin{equation}\label{eq:Am-lower-bound}
    \nu(A_m(\theta))\ge \frac1t.
\end{equation}
By applying Theorem~ \ref{thm:KKT-circle} to the partition $\{C_m\}_{m=1}^t$ of $\mathbb{T}$, there exist an index $m_0\in\{1,\dots,t\}$ and a rotation
\[
    R_{\theta_0}: \mathbb{T} \to \mathbb{T},\qquad R_{\theta_0}(\theta)=\theta+\theta_0,
\]
such that
\begin{equation}\label{eq:KKT-circle-hit}
    \mu_1\big(R_{\theta_0}(C_{m_0})\cap E_i^\mathbb{T}\big)>0,
    \qquad\, \forall\, 1\le i\le r.
\end{equation}

To complete the proof of the theorem, it remains to use \eqref{eq:KKT-circle-hit} to show that
\begin{align}\label{ineq:intersection-rotation-E_i^X}
        \mu\bigl(R_{\theta_0}(F_{m_0}) \cap E_i^X\bigr) > 0
    \qquad \text{for all } 1 \le i \le r.
\end{align}
Let 
\[
    R_{\theta_0}(\theta,y):=(\theta+\theta_0,y)
\]
be the induced rotation on $\mathbb{T}\times Y$. By equivariance, we have 
\[
    R_{\theta_0}(F_{m_0})\cap (X\setminus N)
    = R_{\theta_0}\big(\Phi(F_{m_0}^{\mathrm e})\big)
    = \Phi\big(R_{\theta_0}(F_{m_0}^{\mathrm e})\big),
\]
which, together with the definition of $E_i^X$, gives
\[
    R_{\theta_0}(F_{m_0})\cap E_i^X
    = \Phi\Big(R_{\theta_0}(F_{m_0}^{\mathrm e})\cap (E_i^\mathbb{T}\times Y)\Big)
        \cup \Big(R_{\theta_0}(F_{m_0})\cap N\cap E_i^X\Big).
\]
By the assumption on $N$, the definition of $F_{m_0}^{\mathrm e}$, and the rotation on $\mathbb{T}\times Y$, we have
\begin{align*}
    \mu\big(R_{\theta_0}(F_{m_0})\cap E_i^X\big)
    =& (\mu_1\times\nu)\big(R_{\theta_0}(F_{m_0}^{\mathrm e})\cap (E_i^\mathbb{T}\times Y)\big) \\
    = &\int_{\mathbb{T}}\int_Y 
        \mathbf{1}_{R_{\theta_0}(F_{m_0}^{\mathrm e})}(\theta,y)
        \mathbf{1}_{E_i^\mathbb{T}}(\theta)\,d\nu(y)\,d\mu_1(\theta) \\
    =& \int_{\mathbb{T}}\int_Y 
        \mathbf{1}_{A_{m_0}(\theta-\theta_0)}(y)\mathbf{1}_{E_i^\mathbb{T}}(\theta)
        \,d\nu(y)\,d\mu_1(\theta) \\
    =& \int_{\mathbb{T}} \nu\big(A_{m_0}(\theta-\theta_0)\big)\,\mathbf{1}_{E_i^\mathbb{T}}(\theta)\,d\mu_1(\theta).
\end{align*}
By restricting the outer integral to those $\theta$ satisfying 
$\theta-\theta_0\in C_{m_0}$, i.e.\ $\theta\in R_{\theta_0}(C_{m_0})$, and using \eqref{eq:Am-lower-bound} together with \eqref{eq:KKT-circle-hit}, we obtain
\begin{align*}
    \mu\big(R_{\theta_0}(F_{m_0})\cap E_i^X\big)
    \geq & \int_{R_{\theta_0}(C_{m_0})\cap E_i^\mathbb{T}} \nu\big(A_{m_0}(\theta-\theta_0)\big)\,d\mu_1(\theta) \\
    &\ge \frac1t\,\mu_1\big(R_{\theta_0}(C_{m_0})\cap E_i^\mathbb{T}\big)>0
\end{align*}
This is precisely \eqref{ineq:intersection-rotation-E_i^X}. This completes the proof.
\end{proof}

\begin{lemma}[Invariant conditional measures on circle fibres]\label{lem:Haar-fibres}
Let $(X,\mu)$ be a probability space equipped with a measurable, measure-preserving action of the circle group $\mathbb{T}$.
Suppose that $\pi:X\to Y$ is a measurable factor map whose fibres $\pi^{-1}(\{y\})$ are $\mathbb{T}$--orbits, and let $\{\mu_y\}_{y\in Y}$ be a Rokhlin disintegration of $\mu$ over $Y$.
Then for $\nu$--almost every $y\in Y$, the conditional measure $\mu_y$ is invariant under translations on the fibre. Consequently,
\[
\mu_y = \mu_1 \qquad \text{for $\nu$--a.e.\ } y\in Y,
\]
where $\mu_1$ denotes the normalized Haar measure on $\mathbb{T}$.
\end{lemma}
\begin{proof}
Since $\mu$ is invariant under the $\mathbb{T}$--action, the Rokhlin
conditional measures $\mu_y$ must be invariant under translations
along the fibres for $\nu$--almost every $y$.
By uniqueness of the translation-invariant probability measure on
$\mathbb{T}$, it follows that $\mu_y=\mu_1$.
\end{proof}


\section{Proof of Theorem \ref{thm:sphere-Raimi} (Spheres)}\label{section-sphere}

We apply Theorem~\ref{thm:general-circle-bundle} with
$X=\mathbb{S}^{n-1}$ and $\mu=\sigma_{n-1}$. The action 
$\{R_\theta\}_{\theta\in \mathbb{T}}$ is the rotation in the $(x_1,x_2)$–plane, which is measurable and $\sigma_{n-1}$–preserving.


\medskip\noindent
\emph{We now specify $Y$, $N$, and $\Phi$ for Theorem \ref{thm:general-circle-bundle}.}
Define
\[
N := \{x\in\mathbb{S}^{n-1} : x_1=x_2=0\},
\]
which is a spherical subset of codimension $2$ and hence satisfies
$\sigma_{n-1}(N)=0$. Let $Y := \mathbf{B}_{n-2}$ denote the open unit ball in $\mathbb{R}^{\,n-2}$,
\[
Y = \{v \in \mathbb{R}^{\,n-2} : |v| < 1\},
\]
and set
$r(v):=\sqrt{1-|v|^2}$ for $v\in Y$.  
We define the parametrization
\begin{align}\label{definition:Phi-sphere}
    \Phi: \mathbb{T} \times Y \to  \, \mathbb{S}^{n-1}\setminus N, ~~~~ \Phi(\theta,v)
 := \big(r(v)\cos(2\pi\theta),\, r(v)\sin(2\pi\theta),\, v\big).
\end{align}
A direct computation shows $|\Phi(\theta,v)|=1$, and $\Phi$ is clearly measurable.

\emph{$\Phi$ is a bijection.}
Observe that every point $x\in\mathbb{S}^{n-1}\setminus N$ admits a unique polar representation in the $(x_1,x_2)$–plane. Writing
\[
x=\big(r\cos(2\pi\theta),\, r\sin(2\pi\theta),\, v\big),
\qquad r=\sqrt{x_1^2+x_2^2}>0,\ v=(x'',x_n)\in Y,
\]
we see that $x=\Phi(\theta,v)$, so $\Phi$ is surjective.
Conversely, if $\Phi(\theta,v)=\Phi(\theta',v')$ then $v=v'$ from the last $n-2$ coordinates, and since $r(v)>0$,
\[
(\cos 2\pi\theta,\sin 2\pi\theta)
 =(\cos 2\pi\theta',\sin 2\pi\theta'),
\]
which implies $\theta=\theta'$ in $\mathbb{T}$. Thus, $\Phi$ is injective, and therefore a bijection from $\mathbb{T} \times Y$ onto $\mathbb{S}^{\,n-1}\setminus N$.

\medskip\noindent
\emph{Verification of equivariance (assumption (i)).}
By construction,
\begin{align*}
R_\alpha(\Phi(\theta,v))
&= R_\alpha\big(r(v)\cos 2\pi\theta,\ r(v)\sin 2\pi\theta,\ v\big)\\
&= \big(r(v)\cos 2\pi(\theta+\alpha),\ r(v)\sin 2\pi(\theta+\alpha),\ v\big)\\
&= \Phi(\theta+\alpha,v),
\end{align*}
for all $\theta,\alpha\in \mathbb{T}$ and $v\in Y$. This is exactly condition~(i) in Theorem~\ref{thm:general-circle-bundle}.


\medskip\noindent
\emph{Product disintegration (assumption (ii)).}
Consider the projection
\[
\pi:\mathbb{S}^{n-1}\setminus N\to Y,\qquad
\pi(x_1,x_2, x'', x_n) := (x'', x_n),
\]
so that $\pi(\Phi(\theta,v))=v$. Define $\nu$ as the pushforward of
$\sigma_{n-1}$ under $\pi$:
\begin{align}\label{definition:measure-nu-sphere}
    \nu(A) := \sigma_{n-1}\big(\pi^{-1}(A)\big),
\qquad \text{ for every Borel set } ~~ A\subset Y.
\end{align}
Since the projection $\pi$ collapses each rotational orbit to a single point in $Y$, the conditional measures along the fibres must reflect the rotational invariance  of $\sigma_{n-1}$. By Rokhlin’s disintegration theorem, there exists a family of conditional probability measures $\{\mu_v\}_{v\in Y}$ on $\mathbb{T}$ such that, for every bounded measurable $g: \mathbb{T}\times Y\to\mathbb{R}$,
\begin{equation}\label{eq:sphere-disintegration}
\int_{\mathbb{S}^{n-1}} g(\theta(x),\pi(x))\,d\sigma_{n-1}(x)
= \int_Y\int_\mathbb{T} g(\theta,v)\,d\mu_v(\theta)\,d\nu(v),
\end{equation}
where $\theta(x)$ is any measurable choice of $\theta$ with
$x=\Phi(\theta,\pi(x))$.

For each fixed $v\in Y$, the fibre $\pi^{-1}(\{v\})$ is the circle
\[
\{(x_1,x_2,v)\in\mathbb{R}^n : x_1^2+x_2^2=r(v)^2\}
\]
parametrized by $\theta\mapsto\Phi(\theta,v)$. The action $\{R_\alpha\}$ acts transitively on each fibre by $\theta\mapsto\theta+\alpha$ and preserves $\sigma_{n-1}$.

 By Lemma~\ref{lem:Haar-fibres}, the conditional measures on the circle fibres coincide with the normalized Haar measure $\mu_1$. Therefore, for $\nu$-almost every $v$, the conditional measure $\mu_v$ must be invariant under all such rotations.
The only probability measure on $\mathbb{T}$ with this invariance property is the normalized Lebesgue measure $\mu_1$. Hence
\[
\mu_v = \mu_1\qquad\text{for $\nu$-a.e. }v\in Y.
\]
By choosing \(g(\theta,v):=f(\Phi(\theta,v))\) in
\eqref{eq:sphere-disintegration}, and using that \(\Phi\) is a bijection from \(\mathbb{T}\times Y\) onto
\(\mathbb{S}^{n-1}\setminus N\) with \(N\) of measure zero, we obtain, for every bounded measurable $f:\mathbb{S}^{n-1}\to\mathbb{R}$,
\begin{align*}
\int_{\mathbb{S}^{n-1}} f(x)\,d\sigma_{n-1}(x)
= \int_Y\int_\mathbb{T} f(\Phi(\theta,v))\,d\mu_1(\theta)\,d\nu(v)= \int_\mathbb{T} \int_Y f(\Phi(\theta,v))\,d\nu(v)\,d\mu_1(\theta).
\end{align*}
This is precisely condition~(ii) in Theorem~\ref{thm:general-circle-bundle}.

\medskip\noindent
With the structural assumptions of Theorem~\ref{thm:general-circle-bundle} now confirmed for 
\[
X=\mathbb{S}^{n-1},\quad 
\mu=\sigma_{n-1},\quad
Y=\mathbf{B}_{n-2},\quad
\nu \text{ defined in \eqref{definition:measure-nu-sphere}},\quad
N=\{x\in\mathbb{S}^{n-1}: x_1=x_2=0\},
\]
and with $\Phi$ given in \eqref{definition:Phi-sphere}, the theorem furnishes a measurable partition
\[
E_i^{\mathbb{S}^{n-1}}
   =\Phi(E_i^\mathbb{T}\times Y),\qquad 1\le i\le r,
\]
where \(\{E_i^\mathbb{T} \}_{i=1}^r\) arises from Theorem~\ref{thm:KKT-circle}. 
The desired Raimi property follows immediately from Theorem \ref{thm:general-circle-bundle}, completing the proof


\section{Proof of Theorem \ref{thm:power-surface-Raimi} (Rotational power surfaces)}\label{section-power-surface}

We apply Theorem 2.1 with $X = \mathcal{S}_{k,R}$ and $\mu = \sigma_{k,R}$. By rotational symmetry, the maps $\{R_\theta\}_{\theta \in \mathbb{T}}$ act measurably on $\mathcal{S}_{k,R}$ and preserve $\sigma_{k,R}$, the uniform probability measure on the surface.

\emph{Choice of $Y, N$, and $\Phi_k$.} To handle the singular axis, we define
\[ N := \{x \in \mathcal{S}_{k,R} : x_1 = x_2 = 0\}, \]
which is a subset of codimension 2 in the $(n-1)$-dimensional surface, hence $\sigma_{k,R}(N) = 0$. Moreover, $N$ is invariant under all $R_\theta$.

Recall $x'= (x_1, x_2, x'')$ with $x'' \in \mathbb{R}^{n-3}$, and let $\rho = \sqrt{x_1^2 + x_2^2}$ be the radial coordinate in the rotation plane. Then $|x'|^2 = \rho^2 + |x''|^2$, and the condition $0 < |x'| \le R$ is equivalent to $\rho^2 + |x''|^2 \le R^2$. Thus, we take
\begin{equation}\label{definition-Y}
Y := \left\{(\rho, x'') \in (0, \infty) \times \mathbb{R}^{n-3} : \rho^2 + |x''|^2 \le R^2\right\}.
\end{equation}

Define the parametrization $\Phi_k : \mathbb{T} \times Y \rightarrow \mathbb{R}^n$ by
\begin{equation}\label{definition:Phi_k-paraboloid}
\Phi_k(\theta, \rho, x'') := \left(\rho \cos(2\pi\theta), \rho \sin(2\pi\theta), x'', (\rho^2 + |x''|^2)^{k/2}\right).
\end{equation}
A direct computation shows that $x_n = (\rho^2 + |x''|^2)^{k/2} = |x'|^k$ and $|x'|^2 = \rho^2 + |x''|^2 \le R^2$; hence $\Phi_k(\theta, \rho, x'') \in \mathcal{S}_{k,R}$, so $\Phi_k$ is well defined.

\emph{$\Phi_k$ is a measurable bijection} (i.e., a one-to-one correspondence preserving measurable sets).
The restriction $\rho > 0$ excludes the singular axis $N$ and guarantees that every point in $\mathcal{S}_{k,R} \setminus N$ admits a unique polar representation in the $(x_1, x_2)$-plane. Indeed, every point $x = (x', x_n) \in \mathcal{S}_{k,R} \setminus N$ satisfies $x_n = |x'|^k$ and can be written uniquely as
\[ x' = (\rho \cos(2\pi\theta), \rho \sin(2\pi\theta), x''), \quad \rho = \sqrt{x_1^2 + x_2^2} > 0, \]
with $(\rho, x'') \in Y$. Hence $x = \Phi_k(\theta, \rho, x'')$, so $\Phi_k$ is surjective.

Conversely, if $\Phi_k(\theta, \rho, x'') = \Phi_k(\bar\theta, \bar\rho, \bar{x}'')$, then comparison of the last $n-2$ coordinates yields $(\rho, x'') = (\bar\rho, \bar{x}'')$. Since $\rho > 0$, uniqueness of polar coordinates implies $\theta = \bar\theta$ in $\mathbb{T}$. Thus $\Phi_k$ is injective. Therefore, $\Phi_k$ is a measurable bijection from $\mathbb{T} \times Y$ onto $\mathcal{S}_{k,R} \setminus N$.

\emph{Verification of equivariance (assumption (i)).} Since $R_\alpha$ only rotates the $(x_1, x_2)$-coordinates, we obtain
\begin{align*}
R_\alpha(\Phi_k(\theta, \rho, x'')) &= R_\alpha\left(\rho \cos(2\pi\theta), \rho \sin(2\pi\theta), x'', (\rho^2 + |x''|^2)^{k/2}\right) \\
&= \left(\rho \cos(2\pi(\theta+\alpha)), \rho \sin(2\pi(\theta+\alpha)), x'', (\rho^2 + |x''|^2)^{k/2}\right) \\
&= \Phi_k(\theta+\alpha, \rho, x'').
\end{align*}
Thus, $\Phi_k$ satisfies the equivariance condition (i) in Theorem 2.1.

\emph{Product disintegration (assumption (ii)).} Consider the projection $\pi : \mathcal{S}_{k,R} \setminus N \rightarrow Y$ defined by
\[ \pi(x_1, x_2, x'', x_n) := (\rho, x''), \quad \rho := \sqrt{x_1^2+x_2^2}. \]
Then $\pi(\Phi_k(\theta, \rho, x'')) = (\rho, x'')$. Define $\nu$ as the pushforward of $\sigma_{k,R}$ under $\pi$:
\begin{equation}\label{definition:measure-nu-paraboloid}
\nu(A) := \sigma_{k,R}(\pi^{-1}(A)), \quad \text{for every Borel set } A \subset Y.
\end{equation}

The projection collapses each rotational orbit in the $(x_1, x_2)$-plane to a single point in $Y$. By Rokhlin's disintegration theorem (which allows us to write a measure as an integral over conditional measures along fibers), there exists a family of conditional probability measures $\{\mu_{(\rho, x'')}\}_{(\rho, x'') \in Y}$ on $\mathbb{T}$ such that for every bounded measurable $g : \mathbb{T} \times Y \rightarrow \mathbb{R}$,
\[ \int_{\mathcal{S}_{k,R}} g(\theta(x), \pi(x)) \, d\sigma_{k,R}(x) = \int_Y \int_{\mathbb{T}} g(\theta, \rho, x'') \, d\mu_{(\rho, x'')}(\theta) \, d\nu(\rho, x''), \]
where $\theta(x)$ is any measurable choice with $x = \Phi_k(\theta(x), \pi(x))$.

For each fixed $(\rho,x'')\in Y$, the fibre $\pi^{-1}(\{\rho,x''\})$ is
the circle
\[
\big\{\Phi_k(\theta,\rho,x'') : \theta\in \mathbb{T}\big\}
 = \big\{(\rho\cos 2\pi\theta,\rho\sin 2\pi\theta,
          x'',(\rho^2+|x''|^2)^{k/2}) : \theta\in \mathbb{T}\big\},
\]
on which the action $\{R_\alpha\}$ is transitive via
$\theta\mapsto\theta+\alpha$ and preserves $\sigma_{k,R}$. Hence, for $\nu$–almost every $(\rho,x'')$, the conditional measure $\mu_{(\rho,x'')}$ is invariant under all rotations on $\mathbb{T}$, and thus
\[
\mu_{(\rho,x'')} = \mu_1,
\]
the normalized Lebesgue measure on $\mathbb{T}$.

Taking $g(\theta,\rho,x''):=f(\Phi_k(\theta,\rho,x''))$ in the
disintegration identity and using that $\Phi_k$ is bijective from $\mathbb{T} \times Y$ onto $\mathcal{S}_{k,R}\setminus N$ with
$\sigma_{k,R}(N)=0$, we obtain, for every bounded measurable
$f:\mathcal{S}_{k,R}\to\mathbb{R}$,
\[
\int_{\mathcal{S}_{k,R}} f(x)\,d\sigma_{k,R}(x)
 = \int_\mathbb{T} \int_Y f(\Phi_k(\theta,y))\,d\nu(y)\,d\mu_1(\theta),
\]
which is exactly the product disintegration condition~(ii) in Theorem~\ref{thm:general-circle-bundle}.

\medskip\noindent
Having verified all hypotheses of Theorem~\ref{thm:general-circle-bundle} with
\[
X=\mathcal{S}_{k,R},\quad 
\mu=\sigma_{k,R},\quad
Y \text{ given in } \eqref{definition-Y},\quad
\nu \text{ defined in \eqref{definition:measure-nu-paraboloid}},\quad
N=\{x\in \mathcal{S}_{k,R}: x_1=x_2=0\},
\]
and with $\Phi$ given in \eqref{definition:Phi_k-paraboloid}, we may now apply Theorem~\ref{thm:general-circle-bundle}.

Let $\{E_i^\mathbb{T}\}_{i=1}^r$ be the partition of $\mathbb{T}$ with the Raimi property from Theorem~\ref{thm:KKT-circle}, and define
\[
E_1^{\mathcal{S}_{k,R}} := \Phi_k(E_1^\mathbb{T}\times Y)\cup N,\qquad
E_i^{\mathcal{S}_{k,R}} := \Phi_k(E_i^\mathbb{T} \times Y),\quad 2\le i\le r.
\]
Theorem~\ref{thm:general-circle-bundle} then yields the asserted property of the partition $\{E_i^{\mathcal{S}_{k,R}}\}_{i=1}^r$.


\section{Proof of Theorem \ref{thm:cylinder-Raimi} (Cylindrical surface)}\label{section-cylinder}

We apply Theorem~\ref{thm:general-circle-bundle} with
\[
X = \mathcal{C}_{R,\Omega}, \qquad \mu = \mu_{R,\Omega}.
\]
By construction, the maps $\{R_\theta\}$ act on $\mathcal{C}_{R,\Omega}$ by
rotations in the $(x_1,x_2)$–plane; this action is measurable and preserves
the normalized surface measure $\mu_{R,\Omega}$.

\medskip
In the cylindrical case, we identify the base space with the natural
coordinate set
\[
Y=\Omega:=\{(x'',x_n)\in\mathbb{R}^{n-3}\times\mathbb{R}:\ (x'',x_n)
\ \text{satisfies the defining constraints}\},
\]
so that the parametrization is written in the natural coordinates
$\Phi(\theta,x'',x_n)$.

\medskip\noindent
\emph{Choice of $Y$, $N$, and $\Phi$.}
Since there is no singular axis in this case, we take
\[
N := \varnothing.
\]
Define the parametrization
\begin{align}\label{definition:Phi-cylinder}
    \Phi: \mathbb{T} \times Y \to \mathbb{R}^n, \qquad
    \Phi(\theta,x'',x_n)
    := (R\cos 2\pi\theta,\ R\sin 2\pi\theta,\ x'',x_n).
\end{align}
A direct computation shows that
$\Phi:\mathbb{T}\times Y \to \mathcal{C}_{R,\Omega}$ is a measurable bijection.


\emph{Verification of equivariance (assumption (i)).}
Since $R_\alpha$ only rotates the $(x_1,x_2)$–coordinates, we obtain
\begin{align*}
R_\alpha(\Phi(\theta, x'', x_n)) 
 &= R_\alpha\big(R\cos 2\pi\theta,\ R\sin 2\pi\theta,\ x'', x_n\big)\\
 &= \big(R\cos 2\pi(\theta+\alpha),\ R\sin 2\pi(\theta+\alpha),\ x'', x_n\big)\\
 &= \Phi(\theta+\alpha, x'', x_n).
\end{align*}
Thus $\Phi$ intertwines the circle action, establishing condition~(i) of Theorem~\ref{thm:general-circle-bundle}.

\medskip\noindent
\emph{Product disintegration (assumption (ii)).}
Consider the projection
\[
\pi : \mathcal{C}_{R, \Omega}\setminus N \to Y,\qquad
\pi(x_1,x_2,x'',x_n) := (x'', x_n).
\]
Then $\pi(\Phi(\theta,x'',x_n) ) = (x'',x_n)$. 

Define $\nu$ as the pushforward of $\mu_{\text{\tiny $R,\Omega$}}$ under $\pi$, namely
\begin{align}\label{definition:measure-cylinder}
    \nu(A):=\mu_{\text{\tiny $R,\Omega$}}\big(\pi^{-1}(A)\big),
\qquad \text{ for every Borel set } ~~ A\subset Y.
\end{align}
The projection $\pi$ collapses each rotational orbit in the $(x_1,x_2)$–plane to a single point in $Y$.  
By Rokhlin’s disintegration theorem, there exists a family of 
probability measures $\{\mu_{(x'', x_n)}\}_{(x'', x_n)\in Y}$ on $\mathbb{T}$ such that for every bounded measurable function $g:\mathbb{T} \times Y\to\mathbb{R}$,
\begin{equation}\label{eq:cyl-disintegration}
\int_{\mathcal{C}_{R,\Omega}} g(\theta(x),\pi(x))\,d \mu_{\text{\tiny $R,\Omega$}}(x)
= \int_Y\int_\mathbb{T} g(\theta, x'', x_n)\,d\mu_{(x'', x_n)}(\theta)\,d\nu(x'', x_n),
\end{equation}
where $\theta(x)$ is any measurable choice with $x=\Phi(\theta(x),\pi(x))$.

For each fixed $(x'', x_n) \in Y$, the fibre $\pi^{-1}(\{(x'', x_n)\})$ is the circle
\[
\{(x_1,x_2, x'', x_n) : x_1^2+x_2^2=R^2\},
\]
which is parametrized by $\theta\mapsto\Phi(\theta, x'', x_n)$. The action $\{R_\alpha\}$ of $\mathbb{T}$ on $X$ restricts to rotations on each such fibre and preserves $\mu_{\text{\tiny $R,\Omega$}}$. Therefore, for $\nu$–almost every $(x'', x_n)\in Y$, the conditional measure $\mu_{(x'', x_n)}$ must be invariant under all translations $\theta\mapsto\theta+\alpha$ on $\mathbb{T}$. The only probability measure on $\mathbb{T}$ with this invariance property is the normalized Lebesgue measure $\mu_1$, and hence
\[
\mu_{(x'', x_n)} = \mu_1\qquad\text{for $\nu$–a.e. } (x'', x_n)\in Y.
\]

Taking $g(\theta, x'', x_n):=f(\Phi(\theta, x'', x_n))$ in
\eqref{eq:cyl-disintegration}, we obtain
\[
\int_{\mathcal{C}_{R,\Omega}} f(x)\,d\mu_{\text{\tiny $R,\Omega$}}(x)
= \int_Y\int_\mathbb{T} f(\Phi(\theta, x'', x_n))\,d\mu_1(\theta)\,d\nu(x'', x_n)
= \int_\mathbb{T} \int_Y f(\Phi(\theta, x'', x_n))\,d\nu(x'', x_n)\,d\mu_1(\theta)
\]
for every bounded measurable $f:\mathcal{C}_{R,\Omega}\to\mathbb{R}$.
This is precisely condition~(ii) in Theorem~\ref{thm:general-circle-bundle}.

With all hypotheses verified, Theorem~\ref{thm:general-circle-bundle} now applies with
\[
X=\mathcal{C}_{R,\Omega},\quad \mu=\mu_{\text{\tiny $R,\Omega$}},\quad
Y=\Omega,\quad
\nu \text{ defined in \eqref{definition:measure-cylinder}},\quad
N= \varnothing,\quad
\Phi \text{ given in \eqref{definition:Phi-cylinder}},
\]
to obtain a measurable partition
\[
\{E_i^{\mathcal{C}_{R,\Omega}}\}_{i=1}^r
   =\{\Phi(E_i^\mathbb{T} \times Y)\}_{i=1}^r
\]
of $\mathcal{C}_{R,\Omega}$, where $\{E_i^\mathbb{T}\}_{i=1}^r$ is the measurable partition of $\mathbb{T}$ provided by Theorem~\ref{thm:KKT-circle}, with the desired property. 
This completes the proof.

{\bf Acknowledgement.} 
The author would like to express sincere gratitude to Hunseok Kang and Doowon Koh for their valuable comments, which greatly improved the quality of this paper. The author also thanks the Vietnam Institute for Advanced Study in Mathematics (VIASM) for its warm hospitality and excellent working environment, and Dang Thuy Trang for her assistance with the figure.

\end{document}